\title{On finite approximations of transitive graphs}
\author{Andreas Thom}
\address{Andreas Thom, TU Dresden, 01062 Dresden} 
\email{andreas.thom@tu-dresden.de}
\theoremstyle{plain}
\newtheorem{theorem}{Theorem}
\newtheorem{lemma}[theorem]{Lemma}
\newtheorem{corollary}[theorem]{Corollary}
\newtheorem{question}[theorem]{Question}
\theoremstyle{definition}
\newtheorem{remark}[theorem]{Remark}
\newcommand{\beq}{\begin{equation}}
\newcommand{\eeq}{\end{equation}}
\newcommand{\beqn}{\begin{equation*}}
\newcommand{\eeqn}{\end{equation*}}
\newcommand{\brq}{\begin{dmath}[compact]}
\newcommand{\erq}{\end{dmath}}
\newcommand{\brqn}{\begin{dmath*}[compact]}
\newcommand{\erqn}{\end{dmath*}}
\newcommand{\bag}{\begin{align}}
\newcommand{\eag}{\end{align}}
\newcommand{\bagn}{\begin{align*}}
\newcommand{\eagn}{\end{align*}}
\newcommand{\vertiii}[1]{{|\kern-0.2ex|\kern-0.2ex| #1 
    |\kern-0.2ex|\kern-0.2ex|}}
\begin{document}

\begin{abstract}
In this  note we answer a question of Johannes Carmesin, which was circulated at the Oberwolfach Workshop on "Graph Theory" in January 2025. We provide a unimodular, locally finite, and vertex-transitive graph without any perfect finite $r$-local model for $r \in \mathbb N$ large enough.
\end{abstract}

\maketitle

Let $G=(V,E)$ be a graph, by which we mean a set $V$ of vertices and a collection $E$ of $2$-element subsets of $V$ called edges. We say that $G$ is \emph{vertex-transitive}, if its automorphism group is acting transitively on the set $V$. We say that $G$ is \emph{automorphism-regular} if every automorphism fixing a vertex of $G$ acts trivially on $G$, i.e., if the action of the automorphism group of $G$ on the vertex set is regular.

We have the usual distance function $d \colon V \times V \to \mathbb N \cup \{\infty\}$ which assigns to the pair $(v,w) \in V^2$ the length of the shortest path from $v$ to $w$. We denote by $B_G(v,r)$ the induced rooted subgraph on the set of vertices at distance at most $r$ from $v \in G$, rooted at $v$; this is called the ball around $v$ of radius $r$. We call a graph \emph{locally finite} if the ball of radius $r$ around every vertex is a finite graph. We say that a locally finite vertex-transitive graph $G$ has a \emph{perfect finite $r$-local model} if there exists a finite graph $G_0$, such that the ball of radius $r$ around any vertex of $G_0$ is rooted isomorphic to the ball of radius $r$ around some vertex in $G$.

\begin{question}[Carmesin] \label{carmesin}
Does every uni-modular, locally finite, and vertex-transitive graph $G$ have a perfect finite $r$-local model for any $r \in \mathbb N.$
\end{question}

We show in Corollary \ref{answer} that the answer to this question is negative.

\begin{remark}
Note that it is a folklore observation that the grandfather graph (which is a directed graph) cannot be approximated in the above sense by finite directed graphs. Encoding the directions into the graph structure in a standard way yields a locally finite and vertex-transitive graph without any perfect finite $2$-local model, see \cite{MR3460154}. However, the resulting graph is not unimodular and it is natural to add unimodularity to the assumptions when asking the above question.
\end{remark}

A common source of unimodular, locally finite, and vertex-transitive graphs are Cayley graphs of finitely generated groups. Let $\Gamma$ be a finitely generated group and $S$ a generating set, where we assume that $1 \not\in S$. The Cayley graph ${\rm Cay}(\Gamma,S)$ of $G$ with respect to $S$ is a graph with vertex set $\Gamma$ and edge set consisting of all pairs of the form $\{g,gs\}$ with $g \in \Gamma, s \in S.$

\begin{lemma}
Let $\Gamma$ be a finitely generated group with a finite generating set $S=S^{-1}$ with $1 \not \in S$, such that ${\rm Cay}(\Gamma,S)$ is automorphism-regular. For every $r \in \mathbb N$, there exists $r_0:=r_0(r) \in \mathbb N$, such that any rooted automorphism of $B_G(e,r_0)$ fixes $B_G(e,r)$ pointwise.
\end{lemma}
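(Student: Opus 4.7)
The plan is a standard compactness argument. Suppose toward a contradiction that the statement fails for some $r \in \mathbb{N}$: for every $r_0 \geq r$ there is a rooted automorphism $\phi_{r_0}$ of $B_G(e,r_0)$ which moves at least one vertex of $B_G(e,r)$. The goal is to piece the $\phi_{r_0}$ together into a nontrivial rooted automorphism of all of $G = \mathrm{Cay}(\Gamma,S)$, contradicting automorphism-regularity, which forces every rooted automorphism of $G$ to be the identity.

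First I would observe that since $G$ is locally finite, each ball $B_G(e,r_0)$ is finite, so the set $F_{r_0}$ of rooted automorphisms of $B_G(e,r_0)$ that do \emph{not} fix $B_G(e,r)$ pointwise is a nonempty finite set. Next, any rooted automorphism $\phi$ of $B_G(e,r_0+1)$ preserves graph distance from the root (it sends paths from $e$ of length $\ell$ to paths from $e$ of length $\ell$), hence maps $B_G(e,r_0)$ into itself; applying the same argument to $\phi^{-1}$, it restricts to a rooted automorphism of $B_G(e,r_0)$. If the original $\phi$ lies in $F_{r_0+1}$ and $r_0 \geq r$, the restriction still lies in $F_{r_0}$. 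This gives restriction maps $F_{r_0+1}\to F_{r_0}$, turning $(F_{r_0})_{r_0\geq r}$ into an inverse system of nonempty finite sets.

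By König's lemma the inverse limit is nonempty, so I can choose a compatible family $(\phi_{r_0})_{r_0 \geq r}$ with $\phi_{r_0+1}|_{B_G(e,r_0)} = \phi_{r_0}$. Define $\phi \colon V \to V$ by $\phi(v) := \phi_{r_0}(v)$ for any $r_0$ with $v \in B_G(e,r_0)$; this is well-defined by compatibility. Applying the same procedure to the compatible family $(\phi_{r_0}^{-1})$ yields a two-sided inverse, so $\phi$ is a bijection. It preserves edges (each edge is contained in some finite ball) and fixes $e$, so it is a rooted automorphism of $G$. Since $\phi$ agrees with $\phi_r$ on $B_G(e,r)$ and $\phi_r$ moves some vertex there, $\phi$ is nontrivial, contradicting automorphism-regularity. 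There is no genuinely hard step; the main work is the routine verification that the restriction and limit constructions are well-defined.
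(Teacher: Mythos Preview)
Your proof is correct and follows essentially the same compactness argument as the paper's own proof. The paper phrases it as passing to a pointwise-convergent subsequence of the $\varphi_n$, while you use the inverse-limit/K\"onig's lemma formulation; these are equivalent packagings of the same diagonal argument, and your version spells out more carefully the routine verifications (restriction of rooted automorphisms to smaller balls, well-definedness and bijectivity of the limit) that the paper leaves implicit.
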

\begin{proof}
By contradition, there exists $r \in \mathbb N$, such that no $r_0$ can be found with the property. Then, there exists a sequence of rooted automorphisms $\varphi_n$ of $B_G(1,n)$, for $n \geq r$, such that each has a non-trivial restriction to $B_G(r)$. We may choose a subsequence of $(\varphi_n)_n$ which converges pointwise to a non-trivial automorphism of ${\rm Cay}(\Gamma,S)$ that fixes $e\in \Gamma.$ This contradicts the automorphism-regularity of ${\rm Cay}(\Gamma,S)$.
\end{proof}

For every subgroup $\Lambda \leq \Gamma$, we consider the so-called Schreier graph ${\rm Sch}(\Lambda \backslash \Gamma,S)$ with vertex set $\Lambda \backslash \Gamma$ and edges of the form $\{\Lambda g,\Lambda gs \}$ for $g \in \Gamma, s \in S.$ More generally, such a graph can be associated to any right $\Gamma$-action on a set $X.$

\begin{theorem} \label{thm:main}
Let $\Gamma$ be a finitely presented group with a finite generating set $S=S^{-1}$ with $1 \not \in S$, such that ${\rm Cay}(\Gamma,S)$ is automorphism-regular. There exists $r \geq 1$, such that each connected perfect finite $r$-local model arises as the Schreier graph of a finite index subgroup $\Lambda \leq \Gamma$.
\end{theorem}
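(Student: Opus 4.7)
Fix a finite presentation $\Gamma = \langle S \mid R \rangle$, let $L$ denote the maximum length of a relator in $R$, let $r_0 := r_0(1)$ be the radius provided by the previous lemma, and set $r := L + r_0$. The plan is to equip any connected perfect finite $r$-local model $G_0$ with a canonical $S$-labeling of its directed edges, upgrade this labeling to a transitive right action of $\Gamma$ on $V(G_0)$, and read off the desired subgroup $\Lambda$ as a point stabilizer.

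For each vertex $v$ of $G_0$, the definition of a perfect $r$-local model together with vertex-transitivity of $G$ supplies a rooted isomorphism $\varphi_v \colon B_{G_0}(v,r) \to B_G(e,r)$. I would label each directed edge $v \to w$ of $G_0$ by $\varphi_v(w) \in S$. Any two candidates for $\varphi_v$ differ by a rooted automorphism of $B_G(e,r)$, which the previous lemma forces to fix $B_G(e,1)$ pointwise since $r \geq r_0$. Hence the labeling is independent of all choices.

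The core technical step is a propagation claim: for every $v \in V(G_0)$ and every $g \in \Gamma$ with $|g| \leq L$, reading a geodesic word for $g$ as a sequence of edge labels starting at $v$ terminates at the vertex $\varphi_v^{-1}(g)$. I would prove this by induction on $|g|$; the inductive step compares $\varphi_v$ with $\varphi_w$ at $w := \varphi_v^{-1}(g)$ through the rooted automorphism $L_{g^{-1}} \circ \varphi_v \circ \varphi_w^{-1}$ of $B_G(e, r-|g|)$, where $L_{g^{-1}}$ denotes left translation by $g^{-1}$ on $G$. Because $r - |g| \geq r_0$, the lemma forces this automorphism to act as the identity on $S$, which is exactly the coherence one needs to match the next edge label at $w$ with the corresponding generator at $\varphi_v^{-1}(gs)$ back in $G$. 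This propagation step is the main obstacle: one has to bootstrap the canonicity of the labels from immediate neighborhoods to consistency along paths of length up to $L$.

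Once propagation is established, each relator $\rho \in R$, read as a label sequence from any basepoint $v$, returns to $v$, because $\rho$ represents the identity element and therefore closes up inside $B_G(e,r)$. Hence the action of the free group $F(S)$ on $V(G_0)$ given by ``follow the label'' factors through $\Gamma$, yielding a right $\Gamma$-action that is transitive by connectedness of $G_0$. Fixing a basepoint $v_0$ identifies $V(G_0)$ with the finite coset space $\Lambda \backslash \Gamma$, where $\Lambda$ is the stabilizer of $v_0$, and the $S$-labeling realizes $G_0$ as the Schreier graph $\mathrm{Sch}(\Lambda \backslash \Gamma, S)$ by construction.
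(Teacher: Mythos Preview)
Your approach is essentially the paper's: use the lemma to produce a canonical $S$-labeling of $G_0$, check that it defines a free-group action, verify that relators close up so the action descends to $\Gamma$, and take a point stabilizer. The paper chooses $r \geq r_0(r')$ with $B_G(e,r')$ containing all relators, whereas you take $r = L + r_0(1)$ and spell out an explicit propagation step; these are two packagings of the same consistency check.

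One imprecision to fix: your propagation claim, as stated, is for \emph{geodesic} words for $g$, but relators are non-geodesic representatives of the identity, so the claim as written does not apply to them. What your inductive step actually proves (and what you need) is the stronger statement: for any word $s_1\cdots s_k$ with $k \le L$, following the labels from $v$ ends at $\varphi_v^{-1}(s_1\cdots s_k)$. The induction goes through because at step $i$ the intermediate element $g_i = s_1\cdots s_i$ satisfies $|g_i| \le i \le L$, so $r - |g_i| \ge r_0$ and the lemma applies to the rooted automorphism $L_{g_i^{-1}} \circ \varphi_v \circ \varphi_{w_i}^{-1}$. Once you phrase it this way, the application to relators (and to the words $ss^{-1}$ needed for the free-group action) is immediate.
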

\begin{proof} We set $G:={\rm Cay}(\Gamma,S)$. Assume that $G_0=(V,E)$ is a perfect finite $r$-local model for $G$ for some $r \in \mathbb N$ to be determined later.

Assume that $r \geq r_0(2)$ from the previous lemma. Since there exists a unique rooted isomorphism between $B_{G_0}(v,1)$ and $B_G(e,1)$ that extends to a rooted isomorphism between $B_{G_0}(v,r_0)$ and $B_G(e,r_0)$, every edge $\{v,w\}$ determines a unique edge $\{e,s\}$ with $s \in S$ in $B_G(e,1)$; so that we can label the directed edge $(v,w)$ by $s \in S$. Because this rooted isomorphism extends uniquely to an isomorphism between $B_{G_0}(v,2)$ and $B_G(e,2)$, the inverse edge $(w,v)$ gets the label $s^{-1}$ and this allows us to define a transitive right-action of the free group generated by $S$ on $V$. If $r \geq r_0(r')$, using the previous lemma, so that $B_G(e,r')$ includes all relations of a finite presentation of $\Gamma$, then this action factors through $\Gamma.$ We may now set $\Lambda := {\rm Stab}_\Gamma(v) \leq \Gamma$ for some $v \in V$ and identify $G_0$ with the Schreier graph for the right $\Gamma$-action on $\Lambda \backslash \Gamma.$
\end{proof}

\begin{remark}
Note that the previous theorem is not true without the assumption of automorphism-regularity of the Cayley graph. Indeed, for $\Gamma = \mathbb Z^2$ with the usual generating set, one may consider suitable discretizations of the Klein bottle that do provide perfect finite $r$-local models for $r$ arbitrarily large and yet (even though vertex-transitive) do not come finite index subgroups of $\mathbb  Z^2$  -- but they do come from a finite index subgroup of ${\rm Aut}({\rm Cay}(\mathbb Z^2,S)).$ 
\end{remark}

\begin{corollary} \label{answer}
The answer to Question \ref{carmesin} is negative.
\end{corollary}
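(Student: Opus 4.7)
The plan is to apply Theorem~\ref{thm:main} to a pair $(\Gamma,S)$ whose group-theoretic rigidity rules out the existence of suitable Schreier models.

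First, I would fix $\Gamma$ to be a finitely presented, infinite group admitting no proper finite index subgroup. Classical examples include Higman's group, which has no nontrivial finite quotients, and any finitely presented infinite simple group such as Thompson's group $V$. Second, I would choose a finite symmetric generating set $S$ with $1 \notin S$ such that ${\rm Cay}(\Gamma,S)$ is automorphism-regular, i.e., a graphical regular representation of $\Gamma$. The existence of GRRs for essentially all infinite groups is a classical result (Watkins, Babai, Godsil, and others); if a first attempt fails, one enlarges $S$ by a few carefully chosen elements and their inverses to break any residual symmetries fixing $e$.

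The resulting graph $G := {\rm Cay}(\Gamma,S)$ is locally finite and vertex-transitive by construction, and unimodular because its full automorphism group is $\Gamma$ acting by left multiplication, which is discrete and therefore unimodular. Now fix $r$ large enough for Theorem~\ref{thm:main} to apply. Suppose, for contradiction, that $G_0$ is a perfect finite $r$-local model for $G$. Since $B_{G_0}(v,r)$ depends only on the connected component of $v$, each connected component of $G_0$ is itself a perfect $r$-local model of $G$. By Theorem~\ref{thm:main}, each such component is the Schreier graph of some finite index subgroup $\Lambda \leq \Gamma$. Our assumption on $\Gamma$ forces $\Lambda = \Gamma$, whose Schreier graph is a single vertex with no edges (the putative edges $\{\Lambda g,\Lambda gs\}$ collapse to one-element sets, which are excluded by the paper's convention that edges are $2$-element subsets). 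Hence every connected component of $G_0$ is an isolated vertex, contradicting the fact that balls of radius $1$ in $G$ contain $|S|+1 \geq 2$ vertices.

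The main obstacle is the simultaneous arrangement of the three conditions on $(\Gamma,S)$: finite presentation, absence of proper finite index subgroups, and a GRR generating set. Each is well understood in isolation, but the second is the more serious constraint, requiring an appeal to the existence of finitely presented groups without nontrivial finite quotients. Turning Higman's group or Thompson's $V$ into a GRR is then a routine matter of adjoining generators to eliminate residual graph automorphisms, which I would treat as a standard verification rather than a conceptual difficulty.
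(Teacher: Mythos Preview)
Your plan is essentially the paper's own approach: apply Theorem~\ref{thm:main} to a finitely presented, non-residually-finite group equipped with an automorphism-regular Cayley graph, and observe that the resulting Schreier models are too small. The paper uses ${\rm BS}(m,n)$ with $n>m>8$ and an explicit generating set due to Trofimov, while you opt for a group with no finite quotients at all (Higman, Thompson's $V$) together with a generic GRR; both choices feed into Theorem~\ref{thm:main} in the same way, your version trading concreteness for a cleaner contradiction.

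One point to tighten: the GRR results of Watkins, Babai, and Godsil concern \emph{finite} groups, and do not directly give you an automorphism-regular Cayley graph for Higman's group or Thompson's $V$. The relevant reference for infinite finitely generated groups is Leemann--de la Salle (cited in the paper as \cite{leemandelasalle}), which does supply what you need; your informal ``add a few generators to kill residual symmetries'' is plausible but not a proof. Replacing that appeal with the correct citation, or else carrying out the symmetry-breaking argument explicitly for your chosen group, would make the plan complete.
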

\begin{proof}
Trofimov \cite{MR4057287} has provided a basic mechanism that produces automorphism-regular Cayley graphs. This can then be applied to groups that are finitely presented, not residually finite and admit Cayley graphs of large girth. A concrete example is the Baumslag-Solitar group $\Gamma={\rm BS}(m,n) =  \langle a,b \mid ab^ma^{-1}=b^n \rangle$ with the generating set $$S:= \{a,b,a^{-1},b^{-1},a^2,a^{-2},ab,b^{-1}a^{-1}, b^4,b^{-4}\}$$ with $n > m > 8$, see the discussion in \cite{MR4057287}. Since $\Gamma$ is not residually finite, there exists $g \in \Gamma$ with $g \neq 1$ but $g \in \Lambda$ for all finite index subgroups $\Lambda \leq \Gamma$. Now, if $r=d(e,g)$, then the ${\rm Cay}(\Gamma,S)$ cannot have a perfect finite $r$-local model. Indeed, by Theorem \ref{thm:main}, every such model is of the form ${\rm Sch}(\Lambda \backslash \Gamma,S)$ and $\Lambda g= \Lambda$.
\end{proof}

\begin{remark}
The results of Trofimov \cite{MR4057287} are slightly weaker since there it is assumed from the start that the approximation is by vertex-transitive graphs -- that corresponds to the case that $\Lambda$ is normal in our main result. See also the work of de la Salle and Tessera \cite{MR4072155} and the references therein for a more in-depth analysis of coverings of $r$-local models.
\end{remark}

\section*{Acknowledgments}
I thank Mikael de la Salle for remarks on a first version of this note. He pointed out that Theorem \ref{thm:main} follows from Theorem E and Proposition 5.1 in \cite{MR4072155}, which more generally implies that if a Cayley graph $G$ of a finitely presented groups has a countable automorphism group, then there is $r$ such that any perfect finite $r$-local  model appears as a Schreier graph of ${\rm Aut}(G)$.  Combining this with Corollary 1.3 of \cite{leemandelasalle}, one gets the following statement as a consequence:
Every finitely presented group that is not residually finite has a Cayley graph that provides a counterexample to Carmesin's question. See also Corollary K in \cite{MR4072155}, which answers Question \ref{carmesin} implicitly.
However, the setting in \cite{leemandelasalle,MR4072155 } makes all this necessarily considerably more complicated. We still think that our approach is valuable since it is simple and straight to the point.

\begin{bibdiv}
\begin{biblist}

\bib{MR3460154}{article}{
   author={Frisch, Joshua},
   author={Tamuz, Omer},
   title={Transitive graphs uniquely determined by their local structure},
   journal={Proc. Amer. Math. Soc.},
   volume={144},
   date={2016},
   number={5},
   pages={1913--1918},
}
\bib{leemandelasalle}{article}{
   author={Leemann, Paul-Henry},
   author={de la Salle, Mikael},
   title={Cayley graphs with few automorphisms: the case of infinite groups},
   journal={Annales Henri Lebesgue},
   volume={5},
   date={2022},
   pages={73-92},
}

\bib{MR4072155}{article}{
   author={de la Salle, Mikael},
   author={Tessera, Romain},
   title={Characterizing a vertex-transitive graph by a large ball},
   journal={J. Topol.},
   volume={12},
   date={2019},
   number={3},
   pages={705--743},
}

\bib{MR4057287}{article}{
   author={Trofimov, Vladimir I.},
   title={On the limits of vertex-symmetric graphs and their automorphisms},
   language={Russian, with English and Russian summaries},
   journal={Tr. Inst. Mat. Mekh.},
   volume={25},
   date={2019},
   number={4},
   pages={226--234},
   issn={0134-4889},
   translation={
      journal={Proc. Steklov Inst. Math.},
      volume={309},
      date={2020},
      number={1},
      pages={S167--S174},
      issn={0081-5438},
   },
}

\end{biblist}
\end{bibdiv}

\end{document}